\newtheorem{proposition}{Proposition}[section] %denna med i latex
\newtheorem{thm}[proposition]{Theorem}
\newtheorem{lemma}[proposition]{Lemma}
\newtheorem{corollary}[proposition]{Corollary}
\newtheorem{remark}[proposition]{Remark}
\newtheorem{example}[proposition]{Example}
\newcommand{\E}{\ensuremath{{\mathbb E}}}
\newcommand{\Pro}{\ensuremath{{\mathbb P}}}
\begin{document}
\title{A note on the generalized Bernoulli and Euler Polynomials}
%\title{On explicit solutions of optimal stopping problems for strong Markov processes}
\author{Bao Quoc Ta\thanks{\AA bo Akademi University, Department of Mathematics, \small FIN-20500 \AA bo, Finland, email: {tbao}@abo.fi.} }
\date{}
\maketitle

\begin{abstract}
In this paper we use probabilistic methods to derive some results on the generalized Bernoulli and generalized Euler polynomials. Our approach is based on the properties of Appell polynomials associated with uniformly distributed and Bernoulli distributed random variables and their sums.
\end{abstract}
%
%introduce an approach for the explicit solution of optimal stopping problems driven by general one-dimensional Markov processes. This generalizes recent results for L�vy processes. Since fluctuation identities and the Wiener-Hopf factorizations are not known for general processes another tool is needed. We use representations of $\beta$-excessive functions as expected suprema to obtain our results. A variety of examples is given.

{\it Keywords:} {Appell polynomials, Generalized Bernoulli polynomials,  Generalized Euler polynomials.}\\
{\it MSC}: Primary 11B68; Secondary 26C05 33C65.
\section{Introduction}
We start with recalling the definition and basic properties of Appell polynomials. Let $\xi$ be a random variable with some exponential moments, i.e., $\E(e^{\lambda|\xi|})<\infty$ for some $\lambda>0$. The Appell polynomials $Q_{n}^{(\xi)}, n=0,1,2\dots$ associated with $\xi$ are defined via the expansion
\begin{equation}\label{appell}
\frac{e^{ux}}{\E(e^{u\xi})}=\sum_{n=1}^{\infty}\frac{u^n}{n!}Q_{n}^{(\xi)}(x).
\end{equation}
Clearly, in case $\xi\equiv 0$ it holds
\begin{equation}\label{simplest}
Q_{n}^{(0)}(x)=x^n,\quad n=0,1,2,\dots.
\end{equation}
Notice also that $Q_{0}^{(\xi)}(x)=1$ for all $x$.\\
\indent The Appell polynomials have the following properties (see, e.g., Salminen \citep{Sa})
\begin{description}
\item{(i)} Mean value property: \begin{equation}\label{mean}
\E(Q_{n}^{(\xi)}(\xi+x))=x^n.
\end{equation}
\item{(ii)} Recursive differential equation:
\begin{equation*}\label{recursive}
\frac{d}{dx}Q_{n}^{(\xi)}(x)=nQ_{n-1}^{(\xi)}(x).
\end{equation*}
\item{(iii)} If $\xi_1$ and $\xi_2$ are independent random variables then
\begin{equation*}\label{independent}
Q_{n}^{(\xi_1+\xi_2)}(x+y)=\sum_{k=0}^{n}\binom{n}{k}Q_{k}^{(\xi_1)}(x)Q_{n-k}^{(\xi_2)}(y).
\end{equation*}
Choosing here $\xi_2=0$ and $x=0$ gives 
\begin{equation*}\label{form}
Q_{n}^{(\xi_1)}(y)=\sum_{k=0}^{n}\binom{n}{k}Q_{k}^{(\xi_1)}(0)y^{n-k}.
\end{equation*}
\end{description}

\indent Bernoulli and Euler polynomials are, in fact, Appell polynomials as seen in the following examples.
\begin{example}
{(\it Bernoulli polynomials)}
Let $\theta$ be uniformly distributed random variable on $[0,1]$, i.e., $\theta \sim U[0,1]$. Then 
\begin{equation*}
\frac{e^{ux}}{\E(e^{u\theta})}=\frac{ue^{ux}}{e^{u}-1}=\sum_{n=0}^{\infty}\frac{u^n}{n!}B_{n}(x).
\end{equation*}
The polynomials $x\rightarrow B_n(x), n=0,1,\dots$ are called the Bernoulli polynomials (see also \citep[p 809]{AS}). Using (\ref{mean}) and (\ref{recursive}) we may find the explicit expressions: 
$B_0(x)=1,\quad B_1(x)=x-\frac{1}{2}, \quad B_2(x)=x^2-x+\frac{1}{6}, \dots$
\end{example}
\begin{example}\label{exam-2}
(Euler polynomials) Let $\eta$ be a random variable such that $\Pro(\eta=0)=\Pro(\eta=1)=1/2$, i.e., $\eta\sim Ber(1/2)$. Then 
\begin{equation*}
\frac{e^{ux}}{\E(e^{u\theta})}=\frac{2e^{ux}}{e^{u}+1}=\sum_{n=0}^{\infty}\frac{u^n}{n!}E_{n}(x).
\end{equation*}
The polynomials $x\rightarrow E_n(x), n=0,1,\dots$ are called the Euler polynomials(see \citep[p 809]{AS}) and we have, e.g., $E_0(x)=1,\quad E_1(x)=x-\frac{1}{2},\quad E_2(x)=x^2-x, \dots$
\end{example}
In the next section we will define the generalized Bernoulli and the generalized Euler polynomials. We also give new probabilistic proofs for some of their properties via Appell polynomials. In the third section we derive a new identity between the generalized Bernoulli and the generalized Euler polynomials which extends the results in Cheon \citep{Che} and Srivastava and Pint\'er \citep{S-P}.
\section{Generalized Bernoulli and generalized Euler polynomials}
Recall, e.g., from Luke \citep[p 18]{Luc}  and Erd\'elyi \citep[p 253]{Erd} (see also Comtet \citep[p 227]{comtet}) that for a real or complex number $m$, {\it the generalized Bernoulli polynomials} $B_{n}^{(m)}, n=0,1,\dots$ are defined via
\begin{equation}\label{gen-ber}
\frac{u^m e^{ux}}{(e^{u}-1)^m}=\sum_{n=0}^{\infty}\frac{u^n}{n!}B_{n}^{(m)}(x).
\end{equation}
From (\ref{gen-ber}) it immediately follows
\begin{align}
B_{n}^{(0)}(x)&=x^n,\\
\label{sri-19}
B_{n}^{(m+l)}(x+y)&=\sum_{i=0}^{n}\binom{n}{i}B_{i}^{(m)}(x)B_{n-i}^{(l)}(y),\\
\label{sri-24}
B_{n}^{(m)}(x+y)&=\sum_{i=0}^{n}\binom{n}{i}B_{i}^{(m)}(x)y^{n-i},\\
\label{sri-21}
B_{n}^{(m)}(x+1)-B_{n}^{(m)}(x)&=nB_{n-1}^{(m-1)}(x).
\end{align}

%\begin{equation}\label{gen-ber-1}
%B_{n}^{(m)}(m-x)=(-1)^n B_{n}^{(m)}(x).
%\end{equation}
\indent In case $m$ is an integer, we may use a probabilistic approach via Appell polynomials. Indeed, setting $\theta^{(m)}:=\sum_{i=1}^{m}\theta_i \quad\text{and}\quad \theta^{(0)}:=0,$
where $\{\theta_i\}$ is an i.i.d sequence of random variables such that $\theta_i\sim U[0,1]$, it holds
\[\E(e^{u\theta^{(m)}})=\Big(\frac{e^u-1}{u}\Big)^m.\]
Consequently, the  Appell polynomials $Q^{(\theta^{(m)})}_{n}$ associated with $\theta^{(m)}$ are  the generalized Bernoulli polynomials $B_{n}^{(m)}$.\\
\indent We exploit the mean value property (\ref{mean}) to give a proof of formula (\ref{sri-21}) as follows: From (\ref{sri-19}),  (\ref{mean}), and (\ref{sri-24}) we obtain
\begin{equation}\label{p-1}
\E\big(B_{n}^{(m)}(x+\theta_1)\big)=\sum_{i=0}^{n}\binom{n}{i}B_{i}^{(m-1)}(0)\E(B_{n-i}(x+\theta_1))=B_{n}^{(m-1)}(x).
\end{equation}
On the other hand, also from (\ref{sri-24}) 
\begin{align}\label{p-2}
\nonumber
\E\big(B_{n}^{(m)}(x+\theta_1)\big)&=\sum_{i=0}^{n}\binom{n}{i}B_{n-i}^{(m)}(0)\E(x+\theta_1)^{i}\\
\nonumber
&=\sum_{i=0}^{n}\binom{n}{i}B_{n-i}^{(m)}(0)\frac{1}{i+1}[(x+1)^{i+1}-x^{i+1}]\\
&=\frac{1}{n+1}(B_{n+1}^{(m)}(x+1)-B_{n+1}^{(m)}(x)).
\end{align}
Combining (\ref{p-1}) and (\ref{p-2}) gives (\ref{sri-21}).

%Indeed, since $\theta\stackrel{(d)}=1-\theta$ also $\displaystyle{\hat{\theta}^{(m)}}:=\sum_{i=0}^{m}(1-\theta_i)\stackrel{(d)}=\theta^{(m)}$. It follows that $Q_{n}^{(\hat{\theta}^{(m)})}\equiv B_{n}^{(m)}$. On the other hand,
%\begin{equation}\label{mi-app}
%\frac{e^{ux}}{\E(e^{u\hat{\theta}^{(m)}})}=\frac{e^{u(x-m)}}{\E(e^{u(-\theta^{(m)})})},
%\end{equation} 
%and using (\ref{minus-xi}) yields
%\begin{align*}
%B_{n}^{(m)}(x)=Q_{n}^{(\hat{\theta}^{(m)})}(x)&=Q_{n}^{(-\theta^{(m)})}(x-m)
%\\&=(-1)^n Q_{n}^{(\theta^{(m)})}(m-x)\\
%&&=(-1)^n B_{n}^{(m)}(m-x).
%\end{align*}
\begin{remark} (i)
From (\ref{p-1}), by induction, for any positive integer $l\leq m$, we obtain
\begin{equation}\label{for-extend}
\E\Big(B_{n}^{(m)}(x+\sum_{i=1}^{l}\theta_i)\Big)=B_{n}^{(m-l)}(x)
\end{equation}
which coincides with the mean value property (\ref{mean}) in case $m=l$.\\
(ii) For non-integer $m$, there does not exist a random variable $\theta^{(m)}$ such that $\big(\frac{e^u-1}{u}\big)^m$   is the moment generating function of $\theta^{(m)}$. This follows, e.g., from the fact the uniform distribution is not infinitely divisible. Hence, we can connect the generalized Bernoulli polynomials with Appell polynomials only in case $m$ is an integer.
\end{remark}
We can generalize the Euler polynomials similarly as the  Bernoulli polynomials. {\it The generalized Euler polynomials} are defined via (see \citep{Erd})
\begin{equation}\label{gen-eul}
\frac{2^m e^{ux}}{(e^{u}+1)^m}=\sum_{n=0}^{\infty}\frac{u^n}{n!}E_{n}^{(m)}(x),
\end{equation}
and it holds
\begin{align}
\label{norma-2}
E_{n}^{(0)}(x)&=x^n,\\
\label{sri-20}
E_{n}^{(k+l)}(x+y)&=\sum_{i=0}^{n}\binom{n}{i}E_{i}^{(k)}(x)E_{n-i}^{(l)}(y),\\
\label{sri-25}
E_{n}^{(m)}(x+y)&=\sum_{i=0}^{n}\binom{n}{i}E_{i}^{(m)}(x)y^{n-i},\\
\label{sri-22}
E_{n}^{(m)}(x+1)+E_{n}^{(m)}(x)&=2E_{n}^{(m-1)}(x).
\end{align}

%\begin{equation}\label{gen-eu-1}
%E_{n}^{(m)}(m-x)=(-1)^n E_{n}^{(m)}(x).
%\end{equation}
\indent In case $m$ is an integer, let $\eta_j, i=1\dots m$ be an i.i.d sequence of random variables such that $\eta_j\sim Ber(1/2)$. The Appell polynomials $Q^{(\eta^{(m)})}_{n}$ associated with the random variable $\eta^{(m)}:=\sum_{j=1}^{m}\eta_j$ are the generalized Euler polynomials $E_{n}^{(m)}(x)$.\\
\indent Formula (\ref{sri-22}) is proved similarly as formula (\ref{sri-21}). It is seen that a formula analogous (\ref{for-extend}) is valid for the generalized Euler polynomials, i.e., 
\begin{equation}\label{for-extend-2}
\E\Big(E_{n}^{(m)}(x+\sum_{j=1}^{l}\eta_j)\Big)=E_{n}^{(m-l)}(x).
\end{equation}
\indent We also note similarly as for the generalized Bernoulli polynomials that the generalized Euler polynomials can be connected with the Appell polynomials only if $m$ is an integer. 
\section{Relationships between the generalized Bernoulli and the generalized Euler polynomials}
In this section we will generalize results in Cheon \citep{Che} and in Srivastava and Pint\'er \citep{S-P}.
Let us introduce the polynomials $Q_{n}^{((m)+(l))}$ obtained from the expansion, for $m,l\in\mathbb{C}$
\begin{equation}\label{equa-1}
\Big(\frac{u}{e^u-1}\Big)^m \Big(\frac{2}{e^u+1}\Big)^l e^{ux}=\sum_{n=0}^{\infty}\frac{u^n}{n!}Q_{n}^{((m)+(l))}(x).
\end{equation}
It holds
\begin{equation}\label{equa-2}
Q_{n}^{((m)+(l))}(x+y)=\sum_{k=0}^{n}\binom{n}{k}B_{k}^{(m)}(x)E_{n-k}^{(l)}(y),
\end{equation}
and
\begin{equation}\label{equa-3}
Q_{n}^{((m)+(l))}(x)=\sum_{k=0}^{n}\binom{n}{k}Q_{k}^{((m)+(l))}(0)x^{n-k}.
\end{equation}
Furthermore, since 
\begin{align*}
\Big(\frac{u}{e^u-1}\Big)^m \Big(\frac{2}{e^u+1}\Big)^l&=\Big[\Big(\frac{u}{e^u-1}\Big)^{m-1} \Big(\frac{2}{e^u+1}\Big)^l\Big]\frac{u}{e^u-1}\\
&=\Big[\Big(\frac{u}{e^u-1}\Big)^m \Big(\frac{2}{e^u+1}\Big)^{l-1}\Big]\frac{2}{e^u+1},
\end{align*}
we have
\begin{align}\label{lem-0}
\nonumber
Q_{n}^{((m)+(l))}(x)&=\sum_{k=0}^{n}\binom{n}{k}Q_{k}^{((m-1)+(l))}(0)B_{n-k}(x)\\
&=\sum_{k=0}^{n}\binom{n}{k}Q_{k}^{((m)+(l-1))}(0)E_{n-k}(x).
\end{align}
\indent In case $m, l$ are integers, let us consider 
$\theta^{(m)}:=\sum_{i=1}^{m}\theta_i\quad\text{and}\quad \eta^{(l)}:=\sum_{j=1}^{l}\eta_j,$
where $\theta_i\sim U[0,1], i=1,\dots, m$ and $\eta_j\sim Ber(1/2), j=1,\dots, l$ are independent. Then it is seen that $Q_{n}^{((m)+(l))}, n=0,1\dots,$ are the Appell polynomials associated with $\theta^{(m)}+\eta^{(l)}$.
\begin{lemma}\label{lemma}
The following decomposition holds for all $m, l\in \mathbb{C}$
\begin{equation}\label{relation-1}
Q_{n}^{((m)+(l))}(x)=Q_{n}^{((m)+(l-1))}(x)-\frac{n}{2}Q_{n-1}^{((m-1)+(l))}(x).
\end{equation}
\end{lemma}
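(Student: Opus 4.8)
The plan is to prove the stated polynomial identity by passing to generating functions, where a single index shift converts it into an elementary algebraic identity between the defining kernels. This route is preferable to a probabilistic one because the lemma is asserted for all complex $m,l$, whereas the representation of $Q_n^{((m)+(l))}$ as Appell polynomials associated with $\theta^{(m)}+\eta^{(l)}$ is available only for integer exponents.

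First I would introduce the shorthand $F_{m,l}(u,x):=\big(\frac{u}{e^u-1}\big)^m\big(\frac{2}{e^u+1}\big)^l e^{ux}$, so that by (\ref{equa-1}) we have $F_{m,l}(u,x)=\sum_{n=0}^{\infty}\frac{u^n}{n!}Q_n^{((m)+(l))}(x)$. Multiplying the target identity (\ref{relation-1}) by $u^n/n!$ and summing over $n\ge 0$, the left-hand side becomes $F_{m,l}(u,x)$ and the first term on the right becomes $F_{m,l-1}(u,x)$. For the second term I would perform the shift $\frac{n}{2}\frac{u^n}{n!}=\frac{u}{2}\frac{u^{n-1}}{(n-1)!}$, which collapses $\sum_{n\ge 1}\frac{u^n}{n!}\frac{n}{2}Q_{n-1}^{((m-1)+(l))}(x)$ into $\frac{u}{2}F_{m-1,l}(u,x)$. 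Thus it suffices to verify the generating-function identity $F_{m,l}=F_{m,l-1}-\frac{u}{2}F_{m-1,l}$.

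Next I would factor the common prefactor $\big(\frac{u}{e^u-1}\big)^{m-1}\big(\frac{2}{e^u+1}\big)^{l-1}e^{ux}$ out of all three terms. After cancellation the claim reduces to the purely algebraic statement $\frac{u}{e^u-1}\cdot\frac{2}{e^u+1}=\frac{u}{e^u-1}-\frac{u}{e^u+1}$, equivalently $\frac{2}{(e^u-1)(e^u+1)}=\frac{1}{e^u-1}-\frac{1}{e^u+1}$, which is immediate from a one-line partial-fraction computation since the right-hand side equals $\frac{(e^u+1)-(e^u-1)}{(e^u-1)(e^u+1)}$. Matching coefficients of $u^n/n!$ in $F_{m,l}=F_{m,l-1}-\frac{u}{2}F_{m-1,l}$ then recovers (\ref{relation-1}) for every $m,l\in\mathbb{C}$.

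I do not expect a genuine obstacle here: the only point requiring care is the index shift in the second term, which is exactly what produces the coefficient $\frac{n}{2}$ and the drop from degree $n$ to $n-1$. Because the entire argument lives at the level of formal power series in $u$, it is valid for arbitrary complex exponents, so unlike the probabilistic derivations of (\ref{sri-21}) and (\ref{sri-22}) no integrality assumption on $m$ or $l$ is needed.
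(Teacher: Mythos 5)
Your proof is correct, but it takes a genuinely different route from the paper's. You work entirely at the level of generating functions: after the index shift $\frac{n}{2}\frac{u^n}{n!}=\frac{u}{2}\frac{u^{n-1}}{(n-1)!}$, the lemma becomes the kernel identity $F_{m,l}=F_{m,l-1}-\frac{u}{2}F_{m-1,l}$ in your notation, which after cancelling the common prefactor collapses to the partial-fraction identity $\frac{2}{(e^u-1)(e^u+1)}=\frac{1}{e^u-1}-\frac{1}{e^u+1}$; this is airtight, and the validity for all $m,l\in\mathbb{C}$ is immediate since everything is formal power-series algebra. The paper instead argues probabilistically, in keeping with its overall theme: starting from the factorization identities (\ref{lem-0}), it substitutes $x+\theta_1$ (respectively $x+\eta_1$) for $x$, takes expectations over these single random variables, and uses the mean value property (\ref{mean}) together with (\ref{equa-3}) to derive the two functional equations
\begin{align*}
Q_{n}^{((m)+(l))}(x+1)-Q_{n}^{((m)+(l))}(x)&=nQ_{n-1}^{((m-1)+(l))}(x),\\
Q_{n}^{((m)+(l))}(x+1)+Q_{n}^{((m)+(l))}(x)&=2Q_{n}^{((m)+(l-1))}(x),
\end{align*}
whose difference, divided by two, is the lemma. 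What the paper's longer route buys is precisely these intermediate identities, (\ref{lem-3}) and (\ref{lem-6}), which generalize (\ref{sri-21}) and (\ref{sri-22}) and are singled out in a separate remark; what your route buys is brevity and transparency. One small correction to your closing comment: the paper's probabilistic argument is also valid for all complex $m,l$, because its expectations are taken only over the single random variables $\theta_1$ and $\eta_1$; integrality of $m$ and $l$ would be needed only to identify $Q_{n}^{((m)+(l))}$ with the Appell polynomials of $\theta^{(m)}+\eta^{(l)}$, and the paper's proof never uses that identification.
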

\begin{proof}
In the first equality of (\ref{lem-0}), substitute $x+\theta_1$  instead of $x$, take expectations, use the mean value property (\ref{mean}) and apply (\ref{equa-3}) to obtain
\begin{equation*}\label{lem-1}
\E\big(Q_{n}^{((m)+(l))}(x+\theta_1)\big)=Q_{n}^{((m-1)+(l))}(x).
\end{equation*}
Calculating similarly  as in (\ref{p-2}) we get
\begin{equation*}\label{lem-2}
\E\big(Q_{n}^{((m)+(l))}(x+\theta_1)\big)=\frac{1}{n+1}\Big[Q_{n+1}^{((m)+(l))}(x+1)-Q_{n+1}^{((m)+(l))}(x)\Big].
\end{equation*}
Consequently
\begin{equation}\label{lem-3}
Q_{n}^{((m)+(l))}(x+1)-Q_{n}^{((m)+(l))}(x)=nQ_{n-1}^{((m-1)+(l))}(x).
\end{equation}
Moreover, also by (\ref{lem-0})
\begin{equation}\label{lem-4}
\E\big(Q_{n}^{((m)+(l))}(x+\eta_1)\big)=Q_{n}^{((m)+(l-1))}(x),
\end{equation}
and from (\ref{equa-3}) we have
\begin{align}\label{lem-5}
\nonumber
\E\big(Q_{n}^{((m)+(l))}(x+\eta_1)\big)&=\sum_{k=0}^{n}\binom{n}{k}Q_{k}^{((m)+(l))}(0)\E\big((x+\eta_1)^{n-k}\big)\\
&=\frac{1}{2}\Big[Q_{n}^{((m)+(l))}(x+1)+Q_{n}^{((m)+(l))}(x)\Big].
\end{align}
Combining (\ref{lem-4}) and (\ref{lem-5}) implies
\begin{equation}\label{lem-6}
Q_{n}^{((m)+(l))}(x+1)+Q_{n}^{((m)+(l))}(x)=2Q_{n}^{((m)+(l-1))}(x).
\end{equation}
Subtracting (\ref{lem-6}) and (\ref{lem-3}) completes the proof.
\end{proof}
\begin{remark}
Formulas (\ref{lem-3}) and (\ref{lem-6}) generalize formulas (\ref{sri-21}) and (\ref{sri-22}) respectively.
\end{remark}
Our main formula which connects the generalized Bernoulli and the generalized Euler polynomials is given in the next theorem.
\begin{thm}\label{the-1}
For all $m,l\in \mathbb{C}$, it holds
\begin{equation}\label{relation}
\sum_{k=0}^{n}\binom{n}{k}B_{k}^{(m)}(x)E_{n-k}^{(l-1)}(y)=\sum_{k=0}^{n}\binom{n}{k}\Big[B_{k}^{(m)}(x)+\frac{k}{2}B_{k-1}^{(m-1)}(x)\Big]E_{n-k}^{(l)}(y).
\end{equation}
\end{thm}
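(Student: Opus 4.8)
The plan is to recognize that both sides of (\ref{relation}) are evaluations of the mixed polynomials $Q_n^{((m)+(l))}$ at the shifted argument $x+y$, and thereby reduce the claimed identity to the decomposition already established in Lemma~\ref{lemma}.

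First I would apply the expansion (\ref{equa-2}) directly to the left-hand side: with $l$ replaced by $l-1$, the sum $\sum_{k}\binom{n}{k}B_{k}^{(m)}(x)E_{n-k}^{(l-1)}(y)$ is nothing other than $Q_{n}^{((m)+(l-1))}(x+y)$. Next I would split the right-hand side into two pieces. The first piece, $\sum_{k}\binom{n}{k}B_{k}^{(m)}(x)E_{n-k}^{(l)}(y)$, is again $Q_{n}^{((m)+(l))}(x+y)$ by (\ref{equa-2}).

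The only piece needing genuine manipulation is the second one, $\frac{1}{2}\sum_{k}\binom{n}{k}\,k\,B_{k-1}^{(m-1)}(x)E_{n-k}^{(l)}(y)$. Here the key step is the absorption identity $k\binom{n}{k}=n\binom{n-1}{k-1}$. Pulling out the factor $n/2$ and reindexing by $j=k-1$ — the $k=0$ term drops out because of the factor $k$ — turns this into $\frac{n}{2}\sum_{j=0}^{n-1}\binom{n-1}{j}B_{j}^{(m-1)}(x)E_{n-1-j}^{(l)}(y)$. Invoking (\ref{equa-2}) once more, now at degree $n-1$ and with $m$ replaced by $m-1$, identifies this as $\frac{n}{2}Q_{n-1}^{((m-1)+(l))}(x+y)$. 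Consequently, the identity (\ref{relation}) is equivalent to
\begin{equation*}
Q_{n}^{((m)+(l-1))}(x+y)=Q_{n}^{((m)+(l))}(x+y)+\frac{n}{2}Q_{n-1}^{((m-1)+(l))}(x+y),
\end{equation*}
which is exactly the rearrangement of Lemma~\ref{lemma}, formula (\ref{relation-1}), read at the point $x+y$ in place of $x$. This finishes the argument.

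I do not anticipate a real obstacle: the entire content is repackaged from Lemma~\ref{lemma}. The one place to be careful is the reindexing of the second sum, where one must check that the factor $k$ annihilates the $k=0$ term and that the simultaneous shift of the upper binomial index from $n$ to $n-1$ and of the superscript from $m$ to $m-1$ is consistent, so that (\ref{equa-2}) applies verbatim at the lower degree. Since all the manipulations are identities of formal power series (equivalently, of polynomial coefficients) and the underlying expansions (\ref{equa-1})--(\ref{equa-2}) are valid for arbitrary $m,l\in\mathbb{C}$, the conclusion holds for all complex $m$ and $l$, with no appeal to the probabilistic interpretation, which is available only in the integer case.
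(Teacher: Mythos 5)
Your proposal is correct and is essentially the paper's own proof: both rest on expanding via (\ref{equa-2}), reindexing the $\frac{k}{2}B_{k-1}^{(m-1)}$ sum through the absorption identity $k\binom{n}{k}=n\binom{n-1}{k-1}$, and invoking Lemma \ref{lemma} at the point $x+y$. The only difference is direction of presentation --- the paper starts from $Q_{n}^{((m)+(l))}(x+y)$ and unfolds it, while you translate both sides into $Q$-polynomials and reduce to (\ref{relation-1}) --- which is an immaterial rearrangement of the same argument.
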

\begin{proof}
Using (\ref{equa-2}) it is seen that (\ref{relation-1}) can be developed as follows
\begin{align}
\nonumber
&\sum_{k=0}^{n}\binom{n}{k}B_{k}^{(m)}(x)E_{n-k}^{(l)}(y)\\
\nonumber
&=Q_{n}^{(m)+(l)}(x+y)\\
\nonumber
&=Q_{n}^{(m)+(l-1)}(x+y)-\frac{n}{2}Q_{n}^{(m-1)+(l)}(x+y)\\
\label{c2}
&=\sum_{k=0}^{n}\binom{n}{k}B_{k}^{(m)}(x)E_{n-k}^{(l-1)}(y)-\frac{n}{2}\sum_{k=0}^{n-1}\binom{n-1}{k}B_{k}^{(m-1)}(x)E_{n-k-1}^{(l)}(y)\\
\label{c3}
&=\sum_{k=0}^{n}\binom{n}{k}B_{k}^{(m)}(x)E_{n-k}^{(l-1)}(y)-\sum_{k=0}^{n}\binom{n}{k}\frac{k}{2}B_{k-1}^{(m-1)}(x)E_{n-k}^{(l)}(y),
\end{align}
from which  (\ref{relation}) readily follows.
\end{proof}
Corollaries \ref{corl-1} and \ref{corl-2} below can be found in  Srivastava and Pint\'er \citep{S-P} as Theorem 1 and Theorem 2, respectively.
\begin{corollary}\label{corl-1}
For all $m\in \mathbb{C}$
\begin{equation}\label{S-P-them1}
B_{n}^{(m)}(x+y)=\sum_{k=0}^{n}\binom{n}{k}\Big[B_{k}^{(m)}(x)+\frac{k}{2}B_{k-1}^{(m-1)}(x)\Big]E_{n-k}(y).
\end{equation}
\end{corollary}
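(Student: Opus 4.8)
The plan is to obtain the corollary as the special case $l=1$ of Theorem~\ref{the-1}. First I would substitute $l=1$ directly into the identity~(\ref{relation}). On the right-hand side this produces the generalized Euler polynomials $E_{n-k}^{(1)}(y)$; by the definition~(\ref{gen-eul}) with $m=1$, these coincide with the ordinary Euler polynomials $E_{n-k}(y)$ introduced in Example~\ref{exam-2}. Hence, with $l=1$, the right-hand side of~(\ref{relation}) is already in exactly the form claimed in~(\ref{S-P-them1}), with no further manipulation required.

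It then remains only to simplify the left-hand side. Setting $l=1$ turns the factor $E_{n-k}^{(l-1)}(y)$ into $E_{n-k}^{(0)}(y)$, and by~(\ref{norma-2}) we have $E_{n-k}^{(0)}(y)=y^{n-k}$. Thus the left-hand side of~(\ref{relation}) collapses to $\sum_{k=0}^{n}\binom{n}{k}B_{k}^{(m)}(x)\,y^{n-k}$. I would then recognize this sum as the right-hand side of the addition formula~(\ref{sri-24}), which identifies it with $B_{n}^{(m)}(x+y)$. Equating the two simplified sides yields precisely~(\ref{S-P-them1}), completing the argument.

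Since both reductions are nothing more than substitutions into results already established, I do not expect any genuine obstacle. The only point requiring care is the bookkeeping of the superscripts: one must remember that the ordinary Euler polynomial is the $m=1$ member of the generalized family, so that the correct specialization is $l=1$ rather than $l=0$. This single choice simultaneously delivers $E_{n-k}^{(1)}(y)=E_{n-k}(y)$ on the right and $E_{n-k}^{(0)}(y)=y^{n-k}$ on the left, which is exactly what is needed to match both sides of~(\ref{S-P-them1}).
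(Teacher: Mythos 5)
Your proposal is correct and follows exactly the paper's own argument: specialize Theorem~\ref{the-1} to $l=1$, use~(\ref{norma-2}) to turn $E_{n-k}^{(0)}(y)$ into $y^{n-k}$ on the left, and identify the resulting sum with $B_{n}^{(m)}(x+y)$ via~(\ref{sri-24}), while $E_{n-k}^{(1)}(y)=E_{n-k}(y)$ on the right. The paper states this in one line; you have merely filled in the same steps explicitly.
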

\begin{proof}
This follows from (\ref{relation}) by putting $l=1$ and using (\ref{norma-2}) and (\ref{sri-24}).
\end{proof}
 
\begin{corollary}\label{corl-2}
For all $l\in\mathbb{C}$
\begin{equation}\label{relation-3}
E_{n}^{(l)}(x+y)=\sum_{k=0}^{n}\binom{n}{k}\frac{2}{k+1}\Big[E_{k+1}^{(l-1)}(y)-E_{k+1}^{(l)}(y)\Big]B_{n-k}(x).
\end{equation}
\end{corollary}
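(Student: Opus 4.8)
The plan is to obtain (\ref{relation-3}) by combining the decomposition of Lemma~\ref{lemma} with an elementary factorization of generating functions, rather than by specializing Theorem~\ref{the-1}. The bracketed factor $\frac{2}{k+1}\big[E_{k+1}^{(l-1)}(y)-E_{k+1}^{(l)}(y)\big]$ on the right of (\ref{relation-3}) is the clue: it is exactly a mixed polynomial with a negative Bernoulli index. So the first step I would take is to specialize (\ref{relation-1}) to $m=0$. Setting $m=0$ in the defining expansion (\ref{equa-1}) recovers the generalized Euler generating function, whence $Q_n^{((0)+(l))}(x)=E_n^{(l)}(x)$ and $Q_n^{((0)+(l-1))}(x)=E_n^{(l-1)}(x)$. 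Thus (\ref{relation-1}) reads $E_n^{(l)}(x)=E_n^{(l-1)}(x)-\frac{n}{2}Q_{n-1}^{((-1)+(l))}(x)$, and solving for the mixed polynomial and shifting the index via $n\mapsto k+1$ gives the key identity $Q_k^{((-1)+(l))}(y)=\frac{2}{k+1}\big[E_{k+1}^{(l-1)}(y)-E_{k+1}^{(l)}(y)\big]$.

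The second step is to express $E_n^{(l)}(x+y)$ as a binomial convolution of ordinary Bernoulli polynomials with these mixed polynomials. I would read this off the trivial factorization $\big(\frac{2}{e^u+1}\big)^l e^{u(x+y)}=\frac{u\,e^{ux}}{e^u-1}\cdot\frac{e^u-1}{u}\big(\frac{2}{e^u+1}\big)^l e^{uy}$. The first factor is the exponential generating function of the ordinary Bernoulli polynomials $B_{n}(x)$, the second is, by (\ref{equa-1}) with $m=-1$, the generating function of $Q_n^{((-1)+(l))}(y)$, and the product is the generating function of $E_n^{(l)}(x+y)$. Taking the Cauchy product of the two series then yields $E_n^{(l)}(x+y)=\sum_{k=0}^n\binom{n}{k}B_{n-k}(x)\,Q_k^{((-1)+(l))}(y)$.

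Finally, I would substitute the key identity from the first step into this convolution, which reproduces (\ref{relation-3}) after writing the factor $Q_k^{((-1)+(l))}(y)$ as $\frac{2}{k+1}\big[E_{k+1}^{(l-1)}(y)-E_{k+1}^{(l)}(y)\big]$; no reindexing or further manipulation is needed.

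The computation is essentially formal, so I do not expect a substantive obstacle, but the one point requiring care is that the exponent $m=-1$ lies outside the range where the polynomials $Q_n^{((m)+(l))}$ admit a probabilistic interpretation: there is no random variable whose moment generating function is $\big(\frac{e^u-1}{u}\big)^{-1}$, so the mean-value argument used to prove Lemma~\ref{lemma} is unavailable here. I would therefore justify both the convolution and the appearance of $Q_k^{((-1)+(l))}$ purely at the level of the defining expansion (\ref{equa-1}), which is valid for all complex $m,l$, and invoke Lemma~\ref{lemma} only through its stated validity for all $m,l\in\mathbb{C}$.
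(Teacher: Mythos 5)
Your proof is correct, but it takes a genuinely different route from the paper's. The paper obtains Corollary \ref{corl-2} as a literal specialization of Theorem \ref{the-1}: it sets $m=1$ in (\ref{relation}), evaluates the extra sum $\sum_{k}\binom{n}{k}\frac{k}{2}B_{k-1}^{(0)}(x)E_{n-k}^{(l)}(y)$ as $\frac{n}{2}E_{n-1}^{(l)}(x+y)$ by means of the identity implicit in the step from (\ref{c2}) to (\ref{c3}) together with (\ref{sri-25}), and then must shift $n\mapsto n+1$ and reindex, using $E_{0}^{(l-1)}(y)=E_{0}^{(l)}(y)=1$ and $\frac{2}{n+1}\binom{n+1}{k+1}=\frac{2}{k+1}\binom{n}{k}$, to reach (\ref{relation-3}). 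You bypass Theorem \ref{the-1} entirely: specializing Lemma \ref{lemma} at $m=0$ identifies the bracketed coefficient in (\ref{relation-3}) as precisely the mixed polynomial $Q_{k}^{((-1)+(l))}(y)$, and the formula then falls out of a single Cauchy-product factorization of generating functions, with no index gymnastics at all. Your version is cleaner and more illuminating, since it explains structurally why the coefficient $\frac{2}{k+1}\big[E_{k+1}^{(l-1)}(y)-E_{k+1}^{(l)}(y)\big]$ appears, whereas the paper's route has the merit of exhibiting the corollary as a direct consequence of the main theorem, in the form in which Srivastava and Pint\'er state it. Your caution about $m=-1$ is well placed and correctly resolved: the mean-value argument is indeed unavailable there, but both (\ref{equa-1}) and Lemma \ref{lemma} hold for all $m,l\in\mathbb{C}$; note in particular that the Lemma's proof only requires a probabilistic interpretation of the single random variables $\theta_1$ and $\eta_1$, never of the exponent $m$, so invoking it at $m=0$, which produces $Q^{((-1)+(l))}$ on its right-hand side, is legitimate.
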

\begin{proof}
Notice that in the step from (\ref{c2}) to (\ref{c3}) in the proof of Theorem \ref{the-1} we have the identity
 \[\sum_{k=0}^{n}\binom{n}{k}\frac{k}{2}B_{k-1}^{(m-1)}(x)E_{n-k}^{(l)}(y)=\frac{n}{2}\sum_{k=0}^{n-1}\binom{n-1}{k}B_{k}^{(m-1)}(x)E_{n-k-1}^{(l)}(y).\]
Hence,
\[\sum_{k=0}^{n}\binom{n}{k}\frac{k}{2}B_{k-1}^{(0)}(x)E_{n-k}^{(l)}(y)=\frac{n}{2}E_{n-1}^{(l)}(x+y).\]
From this and (\ref{relation}) with $m=1$ we now have
  \begin{align*}\label{cc1}
E_{n-1}^{(l)}(x+y)=\frac{2}{n}\sum_{k=0}^{n}\binom{n}{k}\Big[E_{k}^{(l-1)}(y)-E_{k}^{(l)}(y)\Big]B_{n-k}(x).
\end{align*}
Changing $n$ to $n+1$ and noting that $E_{0}^{(l-1)}(y)=E_{0}^{(l)}(y)=1$ we have
\begin{align*}
E_{n}^{(l)}(x+y)=\frac{2}{n+1}\sum_{k=1}^{n+1}\binom{n+1}{k}\Big[E_{k}^{(l-1)}(y)-E_{k}^{(l)}(y)\Big]B_{n+1-k}(x),
\end{align*}
which implies (\ref{relation-3}).
\end{proof}
\indent We recall also the following formula due to Cheon pp{Che} 
\begin{equation}\label{che}
B_n(y)=\sum_{k=0, k\neq 1}^{n}\binom{n}{k}B_{k}(0)E_{n-k}(y),
\end{equation}
which is now obtained from (\ref{S-P-them1}) by taking $x=0$ and $m=1$. The next result is derived in Srivastava and Pint\'er \citep{S-P}. We conclude this paper by giving a new proof for this.
\begin{proposition}
Formula (\ref{che}) is equivalent to 
\begin{equation}\label{s-p}
2^nB_{n}(x/2)=\sum_{k=0}^{n}\binom{n}{k}B_{k}(0)E_{n-k}(x).
\end{equation}
\end{proposition}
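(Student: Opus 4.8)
The plan is to reduce the claimed equivalence to a single closed-form identity relating $B_n(x/2)$, $B_n(x)$ and $E_{n-1}(x)$, and then to read off that identity from Lemma~\ref{lemma}. First I would note that the right-hand sides of (\ref{che}) and (\ref{s-p}) are the \emph{same} sum $\sum_{k=0}^{n}\binom{n}{k}B_k(0)E_{n-k}(x)$ except that (\ref{che}) omits the term $k=1$. Recalling from the first example that $B_1(x)=x-\tfrac12$, so that $B_1(0)=-\tfrac12$, the omitted term equals
\[\binom{n}{1}B_1(0)E_{n-1}(x)=-\frac{n}{2}E_{n-1}(x).\]
Hence (\ref{che}), with the dummy variable renamed to $x$, is nothing but the rearrangement
\[\sum_{k=0}^{n}\binom{n}{k}B_k(0)E_{n-k}(x)=B_n(x)-\frac{n}{2}E_{n-1}(x),\]
and since the left-hand side here is exactly the right-hand side of (\ref{s-p}), the whole proposition collapses to proving the single identity
\[2^{n}B_n(x/2)=B_n(x)-\frac{n}{2}E_{n-1}(x).\qquad(\star)\]

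Next I would establish $(\star)$ as the $m=l=1$ case of Lemma~\ref{lemma}. The one step that needs checking is that the polynomials $Q_n^{((1)+(1))}$ defined in (\ref{equa-1}) coincide with $2^nB_n(x/2)$: for $m=l=1$ the generating function in (\ref{equa-1}) is
\[\frac{u}{e^u-1}\,\frac{2}{e^u+1}\,e^{ux}=\frac{2u\,e^{ux}}{e^{2u}-1},\]
which under the substitution $v=2u$ is the Bernoulli generating function evaluated at $x/2$, so that $Q_n^{((1)+(1))}(x)=2^nB_n(x/2)$; probabilistically this is just the fact that $\theta_1+\eta_1\sim U[0,2]\stackrel{d}{=}2\theta_1$. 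The remaining identifications $Q_n^{((1)+(0))}(x)=B_n(x)$ and $Q_{n-1}^{((0)+(1))}(x)=E_{n-1}(x)$ are immediate from (\ref{equa-1}). Substituting $m=l=1$ into the decomposition (\ref{relation-1}) then yields precisely $(\star)$.

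Finally I would assemble the equivalence. Each manipulation in the first paragraph is a reversible algebraic rearrangement, so once $(\star)$ is known to hold, (\ref{che}) is true if and only if $\sum_{k}\binom{n}{k}B_k(0)E_{n-k}(x)$ equals $B_n(x)-\frac{n}{2}E_{n-1}(x)$, which by $(\star)$ is the same as its equalling $2^nB_n(x/2)$, i.e.\ (\ref{s-p}). I expect the only non-bookkeeping step to be the identification $Q_n^{((1)+(1))}(x)=2^nB_n(x/2)$ — that is, spotting that convolving $U[0,1]$ with $Ber(1/2)$ yields $U[0,2]$, equivalently carrying out the simplification via $e^{2u}-1=(e^u-1)(e^u+1)$. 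With that in place, $(\star)$ is immediate from Lemma~\ref{lemma} and the rest is the elementary accounting of the single $k=1$ term.
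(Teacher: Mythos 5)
Your proof is correct and takes essentially the same route as the paper: both reduce the equivalence to the single identity $2^nB_n(x/2)=B_n(x)-\frac{n}{2}E_{n-1}(x)$, obtained by setting $m=l=1$ in Lemma \ref{lemma} together with the identification $Q_n^{((1)+(1))}(x)=2^nB_n(x/2)$ via the generating function $\frac{2ue^{ux}}{e^{2u}-1}$. Your write-up merely makes explicit the bookkeeping of the omitted $k=1$ term, which the paper leaves implicit.
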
 
\begin{proof}
From(\ref{relation-1}) putting $m=1, l=1$,  we have
\begin{equation*}\label{new-rela}
Q_{n}^{(\theta^{(1)}+\eta^{(1)})}(x)=B_n(x)-\frac{n}{2}E_{n-1}(x).
\end{equation*}
On the other hand
\[\frac{e^{ux}}{\E(e^{u(\theta^{(1)}+\eta^{(1)})})}=\frac{2ue^{ux}}{e^{2u}-1}=\sum_{n=0}^{\infty}\frac{u^n}{n!}2^nB_{n}(x/2),\]
and, hence,
\begin{equation*}
Q_{n}^{(\theta^{(1)}+\eta^{(1)})}(x)=2^nB_{n}(x/2).
\end{equation*}
So we obtain
\begin{equation*}
B_n(x)-\frac{n}{2}E_{n-1}(x)=2^nB_{n}(x/2),
\end{equation*}
which implies the equivalence of (\ref{che}) and (\ref{s-p}). 
\end{proof}
 \section*{Acknowledgements}
The author would like to thank Professor Paavo Salminen for valuable comments and discussions which improved this paper.

\end{document}